\documentclass[12pt]{article}
\usepackage{amsthm,amsmath,amssymb,amsthm}
\usepackage{dsfont}

\let\al\alpha
\newcommand\C{\mathbb C}
\newcommand\D{\mathbb D}
\let\de\delta
\newcommand{\E}{\mathbb{E}}
\let\ep\varepsilon

\newcommand\PP{\mathcal P}
\newcommand{\prob}{\mathbb{P}}

\newcommand\ro{\rho}
\newcommand{\R}{\mathbb{R}}
\newcommand\RR{\mathcal R}
\newcommand\SSS {\mathcal S}
\newcommand\TT{\mathcal T}

\newcommand\YY{\mathcal Y}
\newcommand\Z{\mathbb Z}
\let\le\leqslant
\let\ge\geqslant

\DeclareMathOperator*{\argmin}{arg\,min}

\newtheorem{theorem}{Theorem}[section]

\newtheorem{lemma}[theorem]{Lemma}
\newtheorem{remark}[theorem]{Remark}
\theoremstyle{definition}

\begin{document}

\title{Estimates of $mm$-entropy of a stable L\'evy process\footnote{Work supported by grant no. 075-15-2025-013
of the Ministry of Education of Russia}}
\author{V.O. Khamzin, M.A. Lifshits}

\maketitle
\hfill {\bf Dedicated to the 80-th anniversary of H.\,Wo\'zniakowski}
\bigskip

\begin{abstract}
In the article the $mm$-entropy (an entropy of a metric measure space) introduced by C.\,Shannon is evaluated for an $\alpha$-stable L\'evy process. For $\alpha<1$ the double-sided
estimates of the same order are obtained for process distribution in Skorokhod space.
\end{abstract}

\section{Introduction}

\subsection{$mm$-entropy: definition and history}
Various kinds of entropy are used for complexity evaluation of mathematical objects since Clausius classical works in termodynamics \cite{Rudolf}.
In this article, entropy is considered as a complexity measure for metric spaces. The most known  notion used for this purpose is metric entropy some times also called $\ep$-entropy. Let us recall the corresponding definition.

Let $(\mathcal{X}, d)$ be a metric space and let $A\subset \mathcal{X}$ be a totally bounded subset of $\mathcal{X}$.
Let $N(A, \ep)$ denote the minimal number of closed balls of radius $\ep$ sufficient to cover the set $A$. Then
\[
   H(A, \ep) := \ln N(A, \ep)
\]
is called  \textit{metric entropy} of $A$.

The idea of using metric entropy for measuring the  "complexity" and  "massiveness" of the sets in metric spaces dates back
to the works of L.S.\,Pont\-ryagin and L.G.\,Shnirelman from the thirties years of the XX century. A huge progress in the study of
metric entropy of various sets in functional spaces was achieved by A.N.\,Kolmogorov's school, see \cite{Kolmogorov} and the famous surveys \cite{KolmogorovTihomirov,Tih1,Tih2, Tih3}.

A bit later, the metric entropy found unexpected and remarkable applications in Probability Theory where R.\,Dudley, V.N.\,Sudakov and X.\,Fernique used it for investigation of the sample path properties of Gaussian processes, cf. \cite{Lifshits1, Lifshits2}.

The notion of metric entropy can be naturally extended to other classes of objects. For a {\it metric space} $(\mathcal{X},\rho)$ equipped with a {\it finite Borel measure} $P$  let us define the {\it $mm$-entropy} by
\begin{eqnarray*}
    N^{mm}(\ep, \delta) &:=& \min\{n: \exists x_1, x_2\dots, x_n: P(\mathcal{X}\setminus \cup_{i = 1}^n B(x_i, \ep)) \le \delta\},
\\
    H^{mm}(\ep,\delta) &:=& \ln N^{mm}(\ep, \delta),
\end{eqnarray*}
where $B(x, r)$ is the closed ball of radius $r$ and center $x$. Notice that this definition uses the well known fact
that a measure $P$  is concentrated on a compact up to an arbitrarily small mass \cite{Halmosh}. The index "$mm$" stands for "measure metric"  by analogy with M.\,Gromov's
$mm$-spaces \cite{Gromov}. The quantity $H^{mm}(\cdot, \cdot)$ is called \textit{$mm$-entropy} of the triplet $(\mathcal{X},\rho,P)$. See also \cite{VerVepZat} for a more detailed analysis of the structure "distance plus measure".

The interest to the study of $mm$-entropy is, in particular, based on the fact that it enables to define a non-trivial invariant of $mm$-spaces with respect to measure preserving isometries. For example, one can state that there is no such isomorphism of two $mm$-spaces if their $mm$-entropies are different.

Let us say some words about the curious history of this notion. In fact, it was introduced by C.\,Shannon in his classical work \cite[Appendix 7]{Shannon} but passed completely unappreciated on the background of other aspects of this work which made a revolution in information transmission theory. It is enough to say that in the first Russian translation of \cite{Shannon} the related Appendix (as well as some others) was omitted as being of no interest.
Therefore, the notion of $mm$-entropy, unlike that of metric entropy, has been almost not studied.
We are only aware of the work \cite{PinSof} and some other works of the same authors where this object appears in the context of signal transmission theory. Some researchers, apparently unaware of \cite{Shannon}, rediscovered it.

In the literature, several well studied notions can be found that are close to $mm$-entropy as
tools for measuring an $\ep$-discretization error but have an integral form. In a vast literature devoted to the dictionary based information transmission, the moments of the discretization error are used, see, for example the works of G.\,Luschgy and G.\,Pages \cite{LuschPa04},
S.\,Dereich with coauthors \cite{Der_dis}--\cite{DerLif}. On the other hand, A.M.\,Vershik used the Kantorovich distance in his works from ergodic theory \cite{Ver10}.

In recent years, with A.M.\,Vershik's initiative, the interest to $mm$-entropy has been renewed due to potential applications in ergodic theory.
Here, one of the most natural objects of study are the distributions of random processes in the corresponding spaces of trajectories.
In this direction, A.M.\,Vershik and M.A.\,Lifshits considered in \cite{VerLif} the distributions of Gaussian processes. They showed
that for a wide class of Banach spaces equipped with a Gaussian measure $mm$-entropy is tightly related to the metric entropy of the corresponding dispersion ellipsoid (the unit ball of the measure's kernel, RKHS) and to the small ball measures. Interestingly, the result of \cite{VerLif} is new even for the distribution of a Wiener process.

The present work is the first one where the $mm$-entropy is evaluated for an important class of {\it non-Gaussian} processes.

\subsection{Stable L\'evy processes and Skorokhod space}

Let us describe the metric space and the measure that we  are going to investigate.

Consider an {\it $\alpha$-stable L\'evy process} $X(t), \ t\in[0, 1]$ with parameter $0 < \alpha < 1$. It is well known that these processes admit a representation
\begin{equation*}
        X(t) = 
        a t + \int_{\mathcal{R}}\mathds{1}_{\{s \leq t\}}u N(du, ds),
\end{equation*}
where $a\in \R$ is a drift coefficient, $\mathcal{R} = \R \times \R_{+}$,  and $N$ is a Poisson random measure on
$\mathcal{R}$ with intensity measure $\nu(du)ds$, while the L\'evy measure $\nu$ has a form
\begin{equation}\label{common: measure nu}
      \nu(du) = \Big(\mathds{1}_{\{u>0\}}\frac{d_1}{u^{1 + \alpha}} + \mathds{1}_{\{u<0\}}\frac{d_2}{|u|^{1+\al}}\Big)du, \qquad  d_1, d_2\geq 0.
\end{equation}
This integral representation essentially means that the number of jumps of the process $X$ that occur on a time
interval $ds$ and whose size belongs  to an interval $du$ is a Poisson random variable of intensity $\nu(du)ds$.
Therefore, $X$ is a jump process with independent and homogeneous in time increments.
If $a=0$, i.e. there is no trend, then the process $X$ is also $1/\alpha$-self-similar.

\medskip

\bigskip

The distribution of $X$ is usually considered in the {\it Skorokhod space} (see more details in \cite{Skor1, Skor2, Bil, Whitt}); we recall now briefly its definition.
\medskip

A function $f:[0, 1] \mapsto \mathbb R$ belongs to the class of \textit{c\`adl\`ag} functions, if it is right continuous and has all left limits, i.e. there exist the limits
\begin{equation*}
            \begin{gathered}
                1)\lim_{s \to t+}f(s) = f(t), \quad  t\in[0,1),\\
                2)  \lim_{s \to t-} f(s), \quad  t\in(0,1] .
            \end{gathered}
\end{equation*}
Such functions form a space $\D[0,1]$ called \textbf{\it Skorokhod space}.

One can define several reasonable distances on $\D[0,1]$. We will be interested in one of the most frequently used of them, the so called $J$-distance.
The Skorokhod's  $J$-distance $\ro$ between two c\`adl\`ag functions $f, g$ is defined as
\[
            \begin{gathered}
                    \ro(f, g) := \inf_{\theta \in \Theta}
                    \max \big[
                            \sup_{0 \leq t\leq 1} |f(t) - g(\theta(t))|, 
                            \sup_{0\leq t\leq 1}|t-\theta(t)
                        |\big],
            \end{gathered}
\]
where $\Theta$ is the class of all increasing continuous mappings $[0,1]$ onto itself.

Although $\D[0,1]$ is both a linear and a topological space, it does not belong to the class of linear topological spaces because the addition operation is not continuous in it. This fact creates certain technical difficulties for operating in it, cf. Lemma \ref{mainlemma1} below.
\bigskip

For our process $X$, as well as for other jump processes, the space $\D[0, 1]$ is a natural sample path space in a sense that there exists a version of $X$ such that its sample paths $X(t), 0\le t \le 1,$ belong to $\D[0,1]$ with probability one.
Accordingly, there is a distribution of $X$ on $\D[0, 1]$, i.e. the Borel measure $\PP$ defined as
\[
    \PP(A):= \prob(X(\cdot)\in A).
\]

In the following, we consider the Skorokhod space equipped with the $J$-distance and the distribution $\PP$ of an
$\alpha$-stable L\'evy process $X$,
as a metric triplet $(\mathcal{X}, \rho, P)$, for which the $mm$-entropy will be evaluated.

\subsection{The main result}

For $mm$-entropy of the just defined triplet $(\D[0,1],\rho,\PP)$  we will prove the following double-sided estimates having the same order.

\begin{theorem} \label{th1}
For every $\de > 0$ there exist positive  constants $B_-,B_+$ such that for all sufficiently small $\ep>0$ it is true that
\begin{equation} \label{th1_bounds}
          B_- \, \ep^{-\alpha} \,  |\ln\ep|  \leq H^{mm}(\ep, \de) \leq B_+ \, \ep^{-\alpha} \, |\ln\ep|.
\end{equation}
\end{theorem}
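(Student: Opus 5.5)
The plan is to split $X$ into a ``large-jump'' part and a ``small-jump'' part at level $\ep$, and to show that the first has entropy of order $\ep^{-\al}|\ln\ep|$ while the second is negligible in the sup-norm. Write
\[
X(t)=at+\int \mathds{1}_{\{s\le t\}} u\,\mathds{1}_{\{|u|>c\ep\}}N(du,ds)+\int \mathds{1}_{\{s\le t\}} u\,\mathds{1}_{\{|u|\le c\ep\}}N(du,ds)=:X_1(t)+X_2(t).
\]
Since $\al<1$, the small-jump part has $\E\sup_t|X_2(t)|\le \int_0^1\int_{|u|\le c\ep}|u|\nu(du)ds \asymp (c\ep)^{1-\al}$. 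This bound is far from $\ep$, so it is not the right way to control $X_2$. Instead I would compensate: the compensated small-jump process $X_2-\E X_2$ has variance $\asymp (c\ep)^{2-\al}$, which is much smaller than $\ep^2$. By Doob's inequality, $\sup_t|X_2-\E X_2|\le \ep/4$ with probability close to one. The deterministic drift $t\int_{|u|\le c\ep}u\,\nu(du)$ is of order $\ep^{1-\al}$. It is linear and does not depend on $\omega$, so it is simply absorbed into the drift, and the centres are built with it. Thus, up to a set of small probability, $\ro(X, \tilde a t+X_1)\le \ep/4$. The non-continuity of addition in $\D[0,1]$ is harmless here because we only add a uniformly small function, and $\ro(f+h,g)\le\ro(f,g)+\|h\|_\infty$.

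\textbf{Upper bound.} The process $X_1$ has a Poisson number $K$ of jumps with mean $\lambda\asymp (c\ep)^{-\al}$. With probability $\ge 1-\de/3$ we have $K\le 2\lambda$. With probability $\ge 1-\de/3$ all jumps are bounded by $M$ (a constant depending on $\de$). Each path of $\tilde a t+X_1$ is then determined by $k\le 2\lambda$ jump times and sizes. For the centres I would take step functions (plus the drift) whose jump times lie on a grid of mesh $\ep/4$ and whose jump sizes lie on a grid of mesh $\ep/(4k)$ in $[-M,M]$; jumps falling in the same grid cell are merged. The time change $\theta$ moving each true jump time onto its grid point costs at most $\ep/4$ in time. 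After the time change, the space error is at most the sum of the jump-size errors plus the drift error $|\tilde a|\ep/4$, hence $\le\ep/2$. The number of centres is at most $\sum_{k\le2\lambda}\binom{4/\ep+k}{k}(8Mk/\ep)^k\le \exp(C\lambda|\ln\ep|)$, which gives $H^{mm}\le B_+\ep^{-\al}|\ln\ep|$.

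\textbf{Lower bound.} This is the main obstacle. Let $\mathcal A$ be the event that $X$ has at least $m:=\eta\ep^{-\al}$ jumps of size in $[3\ep,6\ep]$, say, that are separated in time by more than $\ep^{\gamma}$ for some $\gamma<1$. Also require $\mathcal A$ to say that the remaining part has no jumps in a window of size $2\ep$ around each of them, or more simply require the structure to hold modulo small-jump noise. The mean number of such jumps is $\asymp\ep^{-\al}$, so for $\eta$ small $\prob(\mathcal A)\ge1-\de/2$ for small $\ep$. Fix any centre $g$. If $\ro(X,g)\le\ep$, then each of these $m$ jumps of $X$ (of size $>2\ep$) must be matched by a jump of $g$ at a time within $\ep$ of it, and the uniform error forces this. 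Conditionally on the number of such jumps, their times are i.i.d.\ uniform, independent of everything else. Hence, given $g$, the probability that all $m$ times fall into the $\ep$-neighbourhood of the jump set of $g$ is small. I would make this quantitative as follows: $g$ has at most $L$ jumps of size $>\ep$ only if $L\lesssim m$. More robustly, the times of $X$ must fall within $\ep$ of $J_g=\{$jumps of $g$ of size $\ge\ep\}$. If $|J_g|\le \ep^{-\al}A$, the probability of this is $\le(2\ep A\ep^{-\al})^{m}=\exp(-c\,\ep^{-\al}|\ln\ep|)$. Centres with too many big jumps are handled by noting that $X$ has, with high probability, a bounded number of jumps of size $>\ep$ per unit of $\ep^{-\al}$. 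A ball around $g$ can then capture $X$ only if $g$'s big jumps near $X$'s are few, and one reduces to a modified $g$ containing only the jumps matched with $X$. Thus each ball has $\PP$-mass at most $e^{-c\ep^{-\al}|\ln\ep|}+\de/4$ restricted to $\mathcal A$. Covering mass $1-\de$ therefore needs $N\ge \tfrac12 e^{c\ep^{-\al}|\ln\ep|}$ balls, yielding the lower bound. The delicate point, which I expect to need the most care, is this matching argument under the time change together with the conditional independence of jump times.
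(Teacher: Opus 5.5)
\paragraph{The upper bound fails at its first step, because an inequality points the wrong way.}
The compensated small-jump part has variance
$\int_{|u|\le c\ep}u^2\,\nu(du)\asymp (c\ep)^{2-\al}$. Since $\al>0$, this is much \emph{larger} than $\ep^2$: the ratio is $\asymp c^{2-\al}\ep^{-\al}\to\infty$.

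\begin{itemize}
\item Doob's inequality therefore gives only $\prob(\sup_t|X_2-\E X_2|>\ep/4)\lesssim c^{2-\al}\ep^{-\al}$, which is useless.
\item In fact, by self-similarity $X_2$ is $c\ep$ times a process with jumps bounded by $1$, run over the horizon $(c\ep)^{-\al}$. Its centred version fluctuates by order $(c\ep)^{-\al/2}$, so $\|X_2-\E X_2\|_\infty$ is typically of order $\ep^{1-\al/2}\gg\ep$.
\item A time change of size $\ep$ cannot absorb such a persistent level offset. So $\ro(X,\tilde a t+X_1)\le\ep/4$ is false, and centres that merely copy the big jumps are not $\ep$-close to $X$.
\item Lowering the threshold until Doob works ($h\ll\ep^{2/(2-\al)}$) does not help. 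The number of individually encoded jumps becomes $\asymp h^{-\al}\gg\ep^{-2\al/(2-\al)}$, which spoils the order $\ep^{-\al}|\ln\ep|$.
\end{itemize}

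The missing idea is that the small jumps carry entropy of the same order and must be encoded, not discarded. The paper does this as follows (Lemma~\ref{mainlemma3}):
\begin{itemize}
\item It rescales, $X_\ep(t)\overset{d}{=}\ep X_1(t\ep^{-\al})$, and tracks $X_1$ by integer-level stopping times.
\item It shows that with high probability there are $O(\ep^{-\al})$ level changes and no step shorter than $2\ep^{\beta}$.
\item Discretizing the times then gives a sup-norm net of size $\exp(K_2\ep^{-\al}|\ln\ep|)$ with controlled oscillation.
\end{itemize}
Because you then add a function that is \emph{not} small, your remark that the discontinuity of addition is harmless no longer applies. This is why the paper needs Lemma~\ref{mainlemma1}, with its term $\omega(y_2,2\ro(x_1,y_1))$. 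It is also why the big-jump part is approximated at the much finer precision $\ep^{B}$ (Lemma~\ref{mainlemma2}).

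A smaller point: merging big jumps that lie in one time cell of width $\ep/4$ is not admissible in the $J$-metric. An increasing $\theta$ cannot send two jump times to one point, so the error equals the size of one of the jumps. For $\al>1/2$, pairs of jumps larger than $2\ep$ within time $\ep/4$ occur with high probability (expected number $\asymp\ep^{1-2\al}$). You need a polynomially finer time grid.

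\paragraph{In the lower bound, the set $J_g$ of jumps of size $\ge\ep$ of a centre is not controlled at all.}
Take $g=x-\tfrac{\ep}{2}+\tfrac{\ep}{2}s$, with $s=\pm1$ alternating on an arbitrarily fine grid. Then $\|g-x\|_\infty\le\ep$, and $g$ has jumps of size $\ep$ everywhere, so the $\ep$-neighbourhood of $J_g$ can be all of $[0,1]$. Replacing $g$ by a ``modified $g$ containing only the jumps matched with $X$'' makes the centre depend on $X$. That destroys the conditional uniformity of the jump times on which your estimate rests.

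The repair is to stagger the thresholds, as the paper does with $5\ep,3\ep,\ep$:
\begin{itemize}
\item A jump of the centre exceeding $3\ep$ forces a jump exceeding $\ep$ of every path in the ball.
\item Hence a ball containing a typical path has a centre with at most $\tilde c\ep^{-\al}$ such jumps.
\item Each of the $\gtrsim\ep^{-\al}$ jumps exceeding $5\ep$ of a typical path in the ball must lie within $\ep$ of one of them.
\item This gives ball mass $\le(2\tilde c\ep^{1-\al})^{m_\ep}$ on the typical set.
\end{itemize}
The paper runs the same count with the roles of centre and random path exchanged (Lemma~\ref{lemma1}). The separation and ``no nearby jumps'' conditions in $\mathcal A$ are unnecessary, and conditioning on them would again spoil the i.i.d.\ uniformity.
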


The article is organized as follows. In Section \ref{s:lower} a lower bound announced in Theorem \ref{th1} is obtained. In Section \ref{s:upper} we give a sketch of the proof of the corresponding upper bound using some auxiliary results whose proofs are postponed to 
Section \ref{s:lemmasproofs}. The final section describes further possible research directions.
\bigskip

In the following we use some standard notations. For a finite set $A$, the
quantity $|A|$ denotes the number of elements of $A$. We denote by $||x||_\infty$ the standard
sup-norm of a function $x$ on $[0,1]$.


\section{The lower bound for $mm$-entropy}
\label{s:lower}
   
Let $x\in \D$ 
and $\ep>0$.  We denote:
\\ $M_\ep(x)$ -- the number of jumps of $x$ with absolute value larger than $5\,\ep$,
\\ $K_\ep(x)$ --  the number of jumps of $x$ with absolute value larger than $3\,\ep$,
\\ $N_\ep(x)$ -- the number of jumps of $x$ with absolute value larger than $\ep$.

\begin{remark}
If $x,y\in\D$  are such that $\ro(x,y) \le \ep$, then
\begin{equation}\label{lowerbound: remark1}
        N_\ep(y) \ge K_\ep(x) \ge M_\ep(y).
\end{equation}
\end{remark}

For a non-negative integer $n$ let us consider the set of trajectories 
$\{z\in\D: N_\ep(z) = n\}$, 
i.e. the trajectories having exactly $n$ jumps with absolute value larger than $\ep$. 
We will evaluate the measure of the intersection of this set with a given closed ball of radius
$\ep$.
    
\begin{lemma} \label{lemma1}
For every $n \ge 0$ and every $x\in \D$ it is true that
\begin{equation*}
    \PP(B_{x}(\ep) \cap \{z: N_\ep(z) = n\}) 
    \le \PP(\{z:N_\ep(z) = n\})   \cdot n^k \cdot (2\ep)^k,
\end{equation*}
where $k = K_\ep(x)$, $B_x(\ep)$ is the closed ball of radius $\ep$ centered at $x$. 
\end{lemma}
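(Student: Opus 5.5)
Condition on the event $\{N_\ep(z)=n\}$ and use the Poisson structure of the large jumps. Given $N_\ep(X)=n$, the times $\tau_1,\dots,\tau_n$ of the jumps of absolute value larger than $\ep$ are (as an unordered set) i.i.d. uniform on $[0,1]$. They are independent of the jump sizes and of the remaining part of the process (small jumps and drift). This is the standard property of a Poisson random measure with intensity $\nu(du)\,ds$ restricted to $\{|u|>\ep\}\times[0,1]$, since the intensity is a product with Lebesgue measure in time. It therefore suffices to show that
\[
\prob\big(\ro(X,x)\le\ep \,\big|\, N_\ep(X)=n\big)\le n^k(2\ep)^k .
\]
If $k=0$ the bound is trivial, so assume $k\ge1$. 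If $k>n$, then by the Remark, $\ro(z,x)\le\ep$ forces $N_\ep(z)\ge K_\ep(x)=k>n$, so the intersection is empty and the bound holds. So assume $1\le k\le n$.

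Let $t_1,\dots,t_k$ be the times of the jumps of $x$ of size larger than $3\ep$. I claim that if $\ro(z,x)\le\ep$, then for every $j\le k$ the path $z$ has a jump of size larger than $\ep$ at some time in $[t_j-\ep,t_j+\ep]$. To see this, take a time change $\theta$ with $\sup_t|t-\theta(t)|\le\ep+\eta$ and $\sup_t|x(t)-z(\theta(t))|\le\ep+\eta$ for small $\eta>0$. The path $z\circ\theta$ then has a jump at $t_j$ of size at least $3\ep-2(\ep+\eta)>\ep-2\eta$, located at $\theta(t_j)$, which is within $\ep+\eta$ of $t_j$. Letting $\eta\to0$ gives the claim; this limiting step is the delicate part. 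One can handle it by noting that $z$ has only finitely many jumps larger than $\ep/2$, so the jump location and size stabilize along a sequence $\eta\to0$. Strictness ("larger than $\ep$") may be lost in the limit, but that is harmless: I will simply use the fact, which the Remark already encodes, that each large jump of $x$ is matched by a distinct jump of $z$ of size $>\ep$ within time distance $\ep$. Distinctness holds because $\theta$ is a bijection, so distinct $t_j$ are sent to distinct $\theta(t_j)$.

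So on the event in question there is an injective map $\sigma:\{1,\dots,k\}\to\{1,\dots,n\}$ with $\tau_{\sigma(j)}\in[t_j-\ep,t_j+\ep]$ for all $j$. A union bound over injections gives at most $n(n-1)\cdots(n-k+1)\le n^k$ terms. For a fixed injection, the independence and uniformity of the $\tau_i$ give probability at most $\prod_j|[t_j-\ep,t_j+\ep]\cap[0,1]|\le(2\ep)^k$. Multiplying by $\PP(N_\ep=n)$ gives the lemma. The main obstacle is the rigorous justification that a $J$-close $z$ has matched jumps near the $t_j$; the conditional uniformity of jump times is standard.
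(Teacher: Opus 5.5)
Your proposal is correct and follows the same route as the paper: condition on $N_\ep=n$ so that the $n$ large-jump times are i.i.d.\ uniform, match each of the $k$ jumps of $x$ exceeding $3\ep$ to a distinct jump of $z$ exceeding $\ep$ within time distance $\ep$, and take a union bound over at most $n^k$ index assignments, each contributing at most $(2\ep)^k$. Your limiting argument for the matching is more careful than the paper's one-line assertion, and your worry about losing strictness is unfounded: since $|\Delta x(t_j)|>3\ep$ strictly, the matched jump of $z$ has size at least $|\Delta x(t_j)|-2\ep>\ep$, so you do not need to appeal to the Remark there.
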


\begin{proof} [Proof of Lemma \ref{lemma1}]

If the intersection of  the ball $B_x(\ep)$  with $\{z : N_\ep(z) = n\}$ is empty, 
then the claim is obviously true.

Assume that this intersection is non-empty. Take $z$ such that
$\ro(x, z) \le \varepsilon $ and  $N_\ep(z) = n$.

Denote by $(t_1, t_2, \dots ,t_k)$ the instants of jumps of the function $x$ with absolute value larger than
$3 \ep$. Let $(s_1, s_2,\dots ,s_n)$ be the instants of jumps of the function $z$ 
with absolute value larger than $\ep$. Since  $n$ is fixed, we may assume that $s_1, s_2,\dots, s_n$ are independent and uniformly distributed on $[0,1]$.
    
 Since $\ro(x, z) \le \ep$, every jump of $x$ with absolute value larger than $3 \ep$ 
 is associated with a jump of $z$, with absolute value larger than $\ep$ and the difference of time
 instants of these jumps does not exceed $\ep$. This means that there exists a subset of indices 
 $ m_1, m_2, \dots, m_k $ such that
    
\[
    \max_{1\le i\le k} |t_i - s_{m_i}| \le \ep.
\]

Let us evaluate the $\PP$-measure of such $z$. First, let us fix     
$ \{m_1, m_2, \dots, m_k\}$. Since  the instants $s_{m_1}, s_{m_2}, \dots, s_{m_k}$ are independent and uniformly
distributed, the $\PP$-measure  of $z$ such that
\[
    \quad s_{m_i} \in [t_i - \varepsilon, t_i + \varepsilon] \qquad   i \le k, 
\]
admits the bound
$
    \PP(\{z : N_\varepsilon(z) = n\}) \cdot (2\varepsilon)^k.
$
It remains to recall that the number of possible subsets of indices    
$\{ m_1, m_2, \dots, m_k \}$ does not exceed $n^k$, which leads to the announced inequality.  

\end{proof}


For $\ep > 0$  let $G(\ep)$ denote the expected number of jumps of the process $X$ on $[0,1]$
with absolute value exceeding $\ep$.
By the definition of $X$ and the formula for its intensity measure \eqref{common: measure nu} 
one has
\begin{equation}\label{common:G(l)}
       G(\ep) =    \int_{|u|> \ep}\nu(du )
       =(d_1+d_2)\, \alpha^{-1} \, \ep^{-\alpha} =G(1) \, \ep^{-\alpha}.
\end{equation}
The random variable $M_\ep(X)$ has Poisson distribution with intensity parameter
\[  
   \mu_{M,\ep} := G\big(5\ep\big),
\] 
By using \eqref{common:G(l)}, this can be rewritten as 
\[
   \mu_{M,\ep} = G(1)\big(5\ep\big)^{-\alpha}.
\]
Let
\[
     T_{M,\ep} := \left\{x\in \D: |M_\ep(x) - \mu_{M,\ep}| \le \sqrt{\frac{6\mu_{M,\ep}}{\delta}}\right\}.
\]
By Chebyshev inequality, we have 
\begin{equation}\label{lowerbound: condition on M}
    \prob\{X\not\in T_{M,\ep}\} \le \frac{\de}{6}.
\end{equation}
Similarly, let us define two other sets of functions and write down the corresponding inequalities:
\[
    T_{K,\ep} := \left\{x\in \D: |K_\ep(x) - \mu_{K,\ep}| \le \sqrt{\frac{6\mu_{K,\ep}}{\delta}}\right\},
\]
with $\mu_{K,\ep} = G(1)\big(3\ep\big)^{-\alpha}$,  
\begin{equation}\label{lowerbound: condition on K}
    \prob\{X\not\in T_{K,\ep}\} \le \frac{\de}{6},
\end{equation}
and
\[
   T_{N,\ep} := \left\{x\in \D: |N_\ep(x) - \mu_{N,\ep}| \le \sqrt{\frac{6\mu_{N,\ep}}{\delta}}\right\},
\]
where $\mu_{N,\ep} =G(1)\ep^{-\alpha}$, 
\begin{equation}\label{lowerbound: condition on N}
    \prob\{X\not\in T_{N,\ep}\} \le \frac{\de}{6},
\end{equation}

We call  $x\in\D$  \textit{typical}, if it belongs to 
$T_\ep:=T_{M,\ep}\cap T_{K,\ep}\cap T_{N,\ep}$ 
and  \textit{untypical} otherwise. 
By inequalities \eqref{lowerbound: condition on M}, \eqref{lowerbound: condition on K}, \eqref{lowerbound: condition on N}, the $\PP$-measure of untypical functions does not exceed $\frac{\de}{2}$. 

Therefore, we need to cover a part of the set $T_\ep$ having $\PP$-measure at least
$1 - \frac{3\de}{2}$.
 
Let us evaluate the $\PP$-measure of typical functions covered by a ball of radius $\ep$. 
Consider an arbitrary $x\in \D$ and assume that the ball $B_{x}(\ep)$ contains a typical $y\in T_\ep$. 
Let $m_\ep:= \mu_{M,\ep} - \sqrt{\frac{6\mu_{M,\ep}}{\delta}}$. 
Since $y \in T_\ep$, we have 
\[
    M_\ep(y) \ge m_\ep,
\]
hence, by \eqref{lowerbound: remark1},
\begin{equation}\label{lowerbound: inequality on K_ep}
    K_\ep(x) \ge M_\ep(y) \ge m_\ep.
\end{equation}

By Lemma \ref{lemma1} it is true that
\begin{eqnarray}  \nonumber
     \PP\big(B_{x}(\ep)\cap\{z: N_\ep(z) = N_\ep(y)\}\big)
     &\le&
     \PP\big(\{z:N_\ep(z) = N_\ep(y)\}\big)\cdot N_\ep(y)^{K_\ep(x)}\cdot (2\ep)^{K_\ep(x)} 
\\  \label{lower_bound: almost_final_1}
     &=&  \PP\big(\{z:N_\ep(z) = N_\ep(y)\}\big)\cdot (2N_\ep(y)\ep)^{K_\ep(x)}.
\end{eqnarray}
Since $y\in T_\ep\subset T_{N,\ep}$, we have
\[
    N_\ep(y) \le \mu_{N,\ep} + \sqrt{\frac{6\mu_{N,\ep}}{\delta}},
\]
therefore,
\[
    N_{\ep}(y) \le \tilde c \cdot \ep^{-\alpha},
\]
where $\tilde c$ does not depend on $\ep$. 
By \eqref{lowerbound: inequality on K_ep} it follows  that for small $\ep$ one has
\begin{equation}\label{lowerbound: inequality on 2N_ep ep}
    \big(2N_\ep(y)\ep\big)^{K_\ep(x)} \le (2\tilde c \cdot \ep^{1-\alpha})^{K_\ep(x)} \le (2\tilde c\cdot \ep^{1-\alpha})^{m_\ep}.
\end{equation}

We stress that this bound only makes sense for $\alpha<1$, as well as its corollaries that will be derived now.
From \eqref{lower_bound: almost_final_1} and \eqref{lowerbound: inequality on 2N_ep ep},
it follows that
\begin{equation*}
    \PP\big(B_{x}(\ep)\cap\{z: N_\ep(z) = N_\ep(y)\}\big) \le \PP\big(\{z:N_\ep(z) = N_\ep(y)\}\big)\cdot (2\tilde c\cdot \ep^{1-\alpha})^{m_\ep}.
\end{equation*}      

By summing up these estimates over $n\in N_\ep(T_\ep):= \{N_\ep(x), x\in T_\ep\}$, we obtain
\begin{eqnarray*} 
      \PP\big(B_x(\ep)\cap \ep\big)
      &\le& \sum_{n \in N_\ep(T_\ep)}\PP\big(B_{x}(\ep)\cap\{z: N_\ep(z) = n\}\big)
\\   
      &\le& \sum_{n \in N_\ep(T_\ep)} \PP\big(\{z: N_\ep(z) = n\}\big)
          \cdot(2\tilde c\cdot\ep^{1-\alpha})^{m_\ep}
\\   
      &\le& (2\tilde c\cdot\ep^{1-\alpha})^{m_\ep}
          \cdot\sum_{n \in N_\ep(T_\ep)}
               \PP\big(\{z: N_\ep(z) = n\}\big) 
       \le (2\tilde c\cdot \ep^{1-\alpha})^{m_\ep}.
\end{eqnarray*}

Therefore,
\begin{equation*}
    N^{mm}(\ep, \de) \ge \frac{1 - \frac{3\de}{2}}{(2\tilde c\cdot \ep^{1-\alpha})^{m_\ep}}.
\end{equation*}

Hence
\begin{equation}\label{lowerbound: final inequality}
    \begin{gathered}
        H^{mm}(\ep,\de)\ge m_\ep |\ln(\ep^{1-\alpha})|\big(1 + o(1)\big) = \mu_{M,\ep}(1-\alpha)|\ln\ep|\big(1+o(1)\big). 
    \end{gathered}
\end{equation}

Since $\mu_{M,\ep} = G(1)(5\ep)^{-\alpha}$, 
the bound \eqref{lowerbound: final inequality} yields 
\begin{equation*}
    H^{mm}(\ep, \de) \ge B_-\ep^{-\alpha}|\ln\ep|,
\end{equation*}
for all small $\ep$ and some $B_->0$ not depending on $\ep$.


\section{The upper bound for $mm$-entropy} 
\label{s:upper}

Let us split  the process $X$ into two parts:
\begin{equation} \label{split}
     X = X_\ep + X^\ep,
\end{equation}
where
\begin{eqnarray*}
                X^\ep(t) &:=& at + \int_{\RR}\mathds{1}_{\{s \le t\}}\mathds{1}_{|u| > \ep}u N(du, ds),
\\
                X_\ep(t) &:=& \int_{\RR}\mathds{1}_{\{s \le t\}}\mathds{1}_{|u| \le \ep}u N(du, ds).
\end{eqnarray*}
The process $X^\ep$ contains the linear part  and the large jumps of absolute value exceeding $\ep$, 
while the process $X_\ep$  contains other (small) jumps.

For a function $f$ defined on $[0,1]$ and a number $\Delta \ge 0$ let us introduce
the {\it oscillation modulus}
\[ 
        \omega(f, \Delta) := \sup_{|s - t| \le \Delta} |f(s) - f(t)|.
\]
        
\begin{lemma}\label{mainlemma1}
For all $x_1,x_2,y_1, y_2\in \D$ it is true that
\[ 
    \ro(x_1 + x_2, y_1 + y_2) 
    \le \ro(x_1, y_1) + ||x_2 - y_2||_\infty + \omega\big(y_2, 2\ro(x_1, y_1)\big).
\]
\end{lemma}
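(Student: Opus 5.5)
The plan is to take a near-optimal time change for the pair $(x_1,y_1)$ and use the same time change for the sums. Set $r:=\ro(x_1,y_1)$ and fix $\eta>0$. By the definition of the $J$-distance there is $\theta\in\Theta$ with
\[
\sup_{0\le t\le 1}|x_1(t)-y_1(\theta(t))|\le r+\eta, \qquad \sup_{0\le t\le 1}|t-\theta(t)|\le r+\eta .
\]
We then plug this same $\theta$ into the definition of $\ro(x_1+x_2,y_1+y_2)$. The time-deviation term is immediately at most $r+\eta$, which is no larger than the right-hand side of the claimed inequality once $\eta\to0$.

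For the space term we use, for every $t\in[0,1]$, the decomposition
\[
(x_1+x_2)(t)-(y_1+y_2)(\theta(t)) = \big[x_1(t)-y_1(\theta(t))\big] + \big[x_2(t)-y_2(t)\big] + \big[y_2(t)-y_2(\theta(t))\big].
\]
The first bracket is at most $r+\eta$ in absolute value. The second is at most $\|x_2-y_2\|_\infty$. For the third, $|t-\theta(t)|\le r+\eta$, so it is bounded by $\omega(y_2,r+\eta)$. Taking the maximum of the time and space terms and then the infimum over $\Theta$ gives
\[
\ro(x_1+x_2,y_1+y_2)\le r+\eta+\|x_2-y_2\|_\infty+\omega(y_2,r+\eta).
\]

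The only delicate step is removing $\eta$. The modulus $\omega(y_2,\cdot)$ is nondecreasing but need not be right-continuous, because $y_2$ may have jumps, so we cannot simply let $\eta\to0$ inside $\omega$. This is why the statement has $2\ro(x_1,y_1)$ rather than $\ro(x_1,y_1)$.

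\begin{itemize}
\item If $r>0$, choose $\eta\le r$. Then $\omega(y_2,r+\eta)\le\omega(y_2,2r)$ by monotonicity. Letting $\eta\to0$ only in the remaining term $r+\eta$ yields the claim.
\item If $r=0$, we need a separate argument. Since the $J$-distance is a genuine metric on $\D$, we have $x_1=y_1$. Taking $\theta=\mathrm{id}$ gives $\ro(x_1+x_2,y_1+y_2)\le\|x_2-y_2\|_\infty$, which is the claim because $\omega(y_2,0)=0$.
\end{itemize}
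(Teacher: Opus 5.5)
Your proposal is correct and follows the paper's proof essentially step for step. You reuse a near-optimal time change for $(x_1,y_1)$, make the same three-term split, use monotonicity of $\omega(y_2,\cdot)$ with slack at most $\ro(x_1,y_1)$ to reach $\omega\big(y_2,2\ro(x_1,y_1)\big)$, and treat the case $\ro(x_1,y_1)=0$ separately via $x_1=y_1$.
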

    
The result of \ref{mainlemma1} enables to handle the processes of large and small jumps separately. 
The proof of this lemma  is given in Section \ref{s:lemmasproofs}.

Let us now state two basic lemmas concerning the sizes of approximating nets.
Their proofs are also given in Section \ref{s:lemmasproofs}.

\begin{lemma}\label{mainlemma2}
    For all $B > 0$ and $\de > 0$ there exists a constant $K_1>0$ such that for all small $\ep$ 
    one can find a finite net $\YY^\ep$ of size not exceeding $\exp(K_1\ep^{-\alpha}|\ln \ep|)$
    and satisfying
\begin{equation} \label{lemma2_prob_bound}
            \prob\big( \min_{y^\ep\in \YY^\ep}\ro(X^\ep, y^\ep) > \ep^B\big) < \de.
\end{equation}
\end{lemma}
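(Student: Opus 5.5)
We construct the net for $X^\ep$ by quantizing jump counts, times and sizes of the large jumps. First we reduce to a typical set of probability at least $1-\de$. The number $N_\ep(X)$ of jumps of size larger than $\ep$ is Poisson with mean $G(1)\ep^{-\alpha}$, so by Chebyshev $N_\ep(X)\le n_\ep:=2G(1)\ep^{-\alpha}$ with probability at least $1-\de/3$ for small $\ep$. The total variation of the large jumps, $\sum |u_i|$ over jumps with $|u_i|>\ep$, is dominated by the a.s. finite variable $\int_{\RR}\mathds{1}_{\{s\le 1\}}|u|\,N(du,ds)$; this is finite because $\alpha<1$, so $\int \min(|u|,1)\,\nu(du)<\infty$. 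Hence there is $L$, independent of $\ep$, such that every jump satisfies $|u_i|\le L$ with probability at least $1-\de/3$.

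On this event $X^\ep(t)=at+\sum_{i\le n}u_i\mathds{1}_{\{s_i\le t\}}$ with $n\le n_\ep$, $\ep<|u_i|\le L$ and $s_i\in[0,1]$. Put $\eta:=\ep^{B}/(2n_\ep+2)$ and consider the net $\YY^\ep$ of all functions
\[
y(t)=at+\sum_{i\le n}v_i\mathds{1}_{\{r_i\le t\}},
\]
where $n\le n_\ep$, the times $r_i$ lie in the grid $\eta\Z\cap[0,1]$, and the sizes $v_i$ lie in the grid $\eta\Z\cap[-L,L]$.

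To check the approximation, given a typical trajectory we round each $s_i$ to the grid and choose the rounding so that the order of the jump times is preserved. Ties are harmless: coinciding times simply merge jumps, and the jump times are a.s. distinct anyway. This gives a piecewise linear increasing time change $\theta$ with $\sup_t|\theta(t)-t|\le\eta$ that maps each $r_i$ to $s_i$. The drift then contributes an error of at most $|a|\eta$, and rounding the sizes contributes at most $n\eta$ to the sup distance. Therefore
\[
\ro(X^\ep,y)\le(1+|a|+n_\ep)\eta\le\ep^{B}
\]
after adjusting the constant in $\eta$. If preserving the order is awkward, an alternative is to round each time down to the grid independently. Jumps that then fall into the same grid cell merge, but the sup error is still controlled, because merged jumps sit at the same point after the time change.

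The cardinality bound is
\[
|\YY^\ep|\le\sum_{n\le n_\ep}\big((\eta^{-1}+1)(2L\eta^{-1}+1)\big)^n\le (n_\ep+1)\,(C\eta^{-2})^{n_\ep}.
\]
Since $\eta^{-1}\le C\ep^{-B-\alpha}$, we get $\ln|\YY^\ep|\le C\,n_\ep\,(2B+2\alpha+1)|\ln\ep|\le K_1\ep^{-\alpha}|\ln\ep|$.

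The main obstacle is handling the jump times correctly in the $J$-metric. We need a single continuous increasing time change that aligns all $n$ jumps simultaneously, and its supremum displacement must stay at most $\eta$ while the order is preserved. Everything else in the argument is standard counting.
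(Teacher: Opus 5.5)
Your overall scheme matches the paper's covering of $\TT=\bigsqcup_n\R^n\times E_n$ by small cubes: at most $n_\ep=O(\ep^{-\alpha})$ large jumps, sizes capped at a fixed $L$, instants and sizes rounded to a grid of mesh $\eta\asymp\ep^{B+\alpha}$, a piecewise linear time change aligning the jumps, and the count $(C\eta^{-2})^{n_\ep}$. Your drift, size-rounding and cardinality estimates are fine.

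The gap is exactly at the step you call the main obstacle. The claims that ties are harmless, and that merged jumps ``sit at the same point after the time change'', are false in the $J$-metric. A time change is a homeomorphism, so it cannot send two distinct instants to one point. For example, let $0<s<s'<1$ and $u>\ep$. Then $f=u\mathds{1}_{[s,1]}+u\mathds{1}_{[s',1]}$ equals $u$ on $[s,s')$, whereas $g\circ\theta$ with $g=2u\mathds{1}_{[r,1]}$ takes only the values $0$ and $2u$. Hence $\ro(f,g)\ge u>\ep$ however close $s,s',r$ are, which is far above $\ep^B$ for the values $B>1$ used in the upper bound.

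Nor can you always round in a strictly order-preserving way with displacement at most $\eta$. Four instants inside an interval of length less than $\eta$ have at most three grid points within distance $\eta$. Also, an instant in $(0,\eta/2)$ rounded to $r_i=0$ cannot be matched at all, since $\theta(0)=0$. So both your primary rounding and your fallback break down on configurations with clustered instants.

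Since only a high-probability statement is required, the repair is one more typical event. Let $F_\ep$ be the event that all instants of jumps larger than $\ep$ lie in $(\eta,1-\eta)$ and any two of them are more than $\eta$ apart. These instants form a Poisson process of rate $G(\ep)=G(1)\ep^{-\alpha}$. Bounding in expectation the number of instants near the endpoints and the number of close ordered pairs gives $\prob(F_\ep^c)\le 2\eta\,G(\ep)\,(1+G(\ep))=O(\ep^{B-\alpha})$. On $F_\ep$, rounding to the nearest grid point gives $0<r_1<\dots<r_n<1$ with $|r_i-s_i|\le\eta/2$. The piecewise linear $\theta$ through $(0,0)$, $(r_i,s_i)$, $(1,1)$ is then a genuine time change, and your error bound goes through.

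This covers $B>\alpha$. For $B\le\alpha$, use the net built for some $B'>\alpha$, which works because $\ep^{B'}<\ep^{B}$. Alternatively, refine only the time grid, which changes $K_1$ but not the order.

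The paper's cube lemma tacitly needs the same separation. Its piecewise linear $\theta$ with $\theta(t_i)=s_i$ is a homeomorphism only when the center's time coordinates are distinct and interior. That fails for grid cubes whose centers have coinciding time coordinates.
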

    
\begin{lemma}\label{mainlemma3}
    For all $B>2\alpha$ and $\de > 0$ there exists a constant $K_2 >0$ such that for all small $\ep$ 
    one can find a finite net $\YY_\ep$ of size not exceeding $\exp(K_2\ep^{-\alpha}|\ln \ep|)$ and satisfying
\begin{equation} \label{maxomega}  
       \max_{y \in \YY_\ep } \   \omega(y,2\ep^{B}) \le   2\ep
\end{equation}
and    
\begin{equation*}
            \prob\big( \min_{y\in \YY_\ep} ||X_\ep - y||_\infty > 3 \ep\big) \le \de.
\end{equation*}
\end{lemma}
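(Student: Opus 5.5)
We construct the net adaptively, following the small-jump process $X_\ep$ only when it has moved by a macroscopic amount $\ep$. The fact that makes this possible is $\alpha<1$. Then $X_\ep$ has finite variation, and its total variation $V=\int_{\RR}\mathds{1}_{\{s\le 1\}}\mathds{1}_{\{|u|\le\ep\}}|u|\,N(du,ds)$ satisfies
\[
\E V=\int_{|u|\le \ep}|u|\,\nu(du)=C_\alpha\,\ep^{1-\alpha}.
\]
So, measured in units of $\ep$, the path makes only about $\ep^{-\alpha}$ macroscopic moves.

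\textbf{Level-crossing times.} Define $\tau_0=0$, $\ell_0=0$, and recursively $\tau_{i+1}=\inf\{t>\tau_i: |X_\ep(t)-\ell_i|\ge \ep\}$, with $\ell_{i+1}=\ell_i\pm\ep$ according to the direction of the exit. Since all jumps are at most $\ep$, we have $|X_\ep(\tau_{i+1})-\ell_i|<2\ep$, hence $|X_\ep(\tau_{i+1})-\ell_{i+1}|\le\ep$. Between crossings $|X_\ep(t)-\ell_i|<\ep$. Each crossing consumes at least variation $\ep$, up to bookkeeping, so the number of crossings $J$ satisfies $J\le V/\ep+1$. Markov's inequality then gives $\prob(J> L\ep^{-\alpha})\le \de/2$ for $L=2C_\alpha/\de+1$.

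\textbf{The net.} Fix $B'\in(2\alpha,B)$, set $h=\ep^{B'}$, and split $[0,1]$ into $h^{-1}$ cells $I_j=[jh,(j+1)h)$. Note that $h\ge 2\ep^B$ for small $\ep$. For every set of at most $L\ep^{-\alpha}$ distinct cells, each carrying a sign $\pm$, let $y$ be the continuous function, equal to $0$ at $0$, that is constant outside the chosen cells and rises or falls linearly by exactly $\ep$ across each chosen cell. The number of such $y$ is at most
\[
\sum_{k\le L\ep^{-\alpha}}\binom{h^{-1}}{k}2^k\le \exp\big(K_2\ep^{-\alpha}|\ln\ep|\big).
\]
Any window of length $2\ep^B\le h$ meets at most two cells, so $\omega(y,2\ep^B)\le 2\ep$, which is \eqref{maxomega}.

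\textbf{Approximation, and the main obstacle.} On the event $E$ that $J\le L\ep^{-\alpha}$ and that no cell contains two crossing times, choose the $y$ whose ramps sit in the cells containing the $\tau_i$, with the corresponding signs. Outside the ramp cells, $y=\ell_i$ and $|X_\ep-y|<\ep$. Inside the cell of $\tau_i$, $y$ lies between $\ell_{i-1}$ and $\ell_i$, while $X_\ep$ is within $\ep$ of $\ell_{i-1}$ before $\tau_i$ and within $\ep$ of $\ell_i$ after it. Hence $|X_\ep-y|\le 2\ep\le 3\ep$ there.

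The main obstacle is bounding the probability that some cell contains two crossings, since a crude Chebyshev bound is too weak here. Two crossings in one cell force the variation $V_I$ accumulated in that cell to be at least about $\ep$. Since jumps are at most $\ep$, I will use the exponential moment
\[
\E e^{\lambda V_I/\ep}=\exp\Big(h\int_{|u|\le\ep}\big(e^{\lambda|u|/\ep}-1\big)\nu(du)\Big)\le \exp\big(C h\lambda e^{\lambda}\ep^{-\alpha}\big).
\]
This gives $\prob(V_I\ge 2\ep)\le \exp\big(-2\lambda+Ch\lambda e^\lambda\ep^{-\alpha}\big)$. With $\lambda=\ln\big(1/(h\ep^{-\alpha})\big)$ this is $\le (h\ep^{-\alpha})^{2}\,|\ln\ep|^{O(1)}$; if the threshold is actually about $\ep$, one adjusts it, or uses the full Poisson tail, by counting that at least two jumps of size comparable to $\ep$, or enough smaller ones, must occur. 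A union bound over the $h^{-1}$ cells yields $h\,\ep^{-2\alpha}|\ln\ep|^{O(1)}=\ep^{B'-2\alpha}|\ln\ep|^{O(1)}\to0$, and this is exactly where $B>2\alpha$ is used.

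Combining the two estimates, $\prob(E^c)\le\de$ for small $\ep$. On $E$ the path satisfies $\min_{y\in\YY_\ep}\|X_\ep-y\|_\infty\le 3\ep$, which gives the probability bound of the lemma.
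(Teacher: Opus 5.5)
Your route differs from the paper's. You skip the rescaling, count crossings via the total variation and Markov's inequality instead of the law of large numbers, and use a fixed grid of cells carrying continuous ramps instead of step functions with discretized jump instants. The net count, the oscillation bound and the $2\ep$ approximation on $E$ are fine. The crossing count should read $J\le 2V/\ep+1$: a single crossing can cost arbitrarily little variation, and only two consecutive ones are guaranteed to cost $\ep$. That is harmless.

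The genuine gap is exactly the step you call the main obstacle, which is also the only place where $B>2\alpha$ is used. Two crossings in one cell force $V_I\ge\ep$, not $V_I\ge2\ep$, and $\ep$ is precisely the maximal jump size. At that threshold exponential moments cannot give a second-order bound. With $x=h\ep^{-\alpha}$ one has $\E e^{\lambda V_I/\ep}=\exp(x\Phi(\lambda))$, where $\Phi(\lambda)=\int_{|v|\le1}(e^{\lambda|v|}-1)\,\nu(dv)\asymp e^{\lambda}/\lambda$. The optimal Chernoff bound is then only of order $x/|\ln x|$, and summed over the $\ep^{-B'}$ cells this gives about $\ep^{-\alpha}/|\ln\ep|\to\infty$. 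The responsible configuration is one jump of size $\ep-\eta$ followed by further variation $\eta$, with $\eta$ arbitrarily small. Your suggested repair (``two jumps of size comparable to $\ep$, or enough smaller ones'') does not cover this configuration, and it is the dominant one. Moreover, even at threshold $2\ep$ your choice $\lambda=\ln(1/x)$ gives $x^{2-C}$ with $C=(d_1+d_2)/(1-\alpha)$, not $x^2|\ln\ep|^{O(1)}$. You need $\lambda=\ln(1/x)-\ln\ln(1/x)$, which gives $e^{C}x^2\ln^2(1/x)$.

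The simplest repair inside your framework is to allow up to two crossings per cell, i.e.\ ramps of height $0,\pm\ep,\pm2\ep$. The sup-error is still at most $3\ep$: for two up-crossings in a cell, $X_\ep\in[\ell_{i-1}-\ep,\ell_{i-1}+3\ep]$ and $y\in[\ell_{i-1},\ell_{i-1}+2\ep]$ on that cell. The oscillation bound now comes from the Lipschitz constant $2\ep/h$ of $y$, giving $\omega(y,2\ep^B)\le 4\ep^{1+B-B'}\le2\ep$. The bad event becomes three crossings in one cell, which does force $V_I\ge2\ep$ (check the eight sign patterns). The corrected Chernoff bound and the union over cells then give $O(\ep^{B'-2\alpha}\ln^2\ep)\to0$.

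Alternatively, prove $\prob(V_I\ge\ep)=O(x^2|\ln x|)$ directly. Split the jumps at $\ep/2$, and in the case of exactly one large jump integrate over $\eta$ using the bounded density of $\nu$ near $\ep$.

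The paper avoids per-cell second-order estimates altogether. Its hysteresis rule makes every new step of $Y$ require a fresh displacement of at least $\tfrac12$ (i.e.\ $\ep/2$ before rescaling) after a stopping time. By the strong Markov property, a first-order bound $c\,\ep^{\beta}$ per step and a union over the $O(\ep^{-\alpha})$ steps then suffice.
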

\bigskip

Let us derive the required upper bound for $mm$-entropy from these lemmas.
Let us fix a sufficiently large $B$, namely $B>\max\{1,2\alpha\}$.

Let us consider a net $\YY^\ep$ from Lemma \ref{mainlemma2} corresponding to the parameter $B$ 
and to the covering measure $\frac{\de}{3}$. Define a random element $y^\ep\in\YY^\ep$ by the relation
\begin{equation*}
        y^\ep := \argmin_{y\in \YY^\ep}\ro(X^\ep, y) .
\end{equation*}

Similarly, let us consider a net $\YY_\ep$  from Lemma \ref{mainlemma3} with the same parameters
and define a random element
\begin{equation*}
        y_\ep := \argmin_{y\in \YY_\ep}  ||X_\ep - y||_\infty .
\end{equation*} 

 We will show that the net $\YY^\ep + \YY_\ep$ approximates the process $X$ with the required precision. 

Assume first that the following inequalities are true 
\[
   \ro(X^\ep, y^\ep) \le \ep^B,   \qquad  ||X_\ep - y_\ep||_\infty \le 3\, \ep.
\]
Then by Lemma \ref{mainlemma1} we have
\begin{eqnarray*}
   \ro(X^\ep + X_\ep, y^\ep+ y_\ep) 
   &\le& \ro(X^\ep, y^\ep) +  ||X_\ep - y_\ep||_\infty +  \omega(y_\ep,2  \ro(X^\ep, y^\ep) )
\\
  &\le&  \ep^B  +  3 \, \ep  + \omega(y_\ep,2\ep^B).
\end{eqnarray*}
    
By taking into account the inequality $B>1$ and the property \eqref{maxomega}, 
we obtain in this case
\[
    \ro(X^\ep + X_\ep, y^\ep+ y_\ep) \le \ep +  3\,\ep  +2\,\ep = 6\, \ep.
\]
By Lemmas \ref{mainlemma2}  and \ref{mainlemma3} we obtain the probabilistic estimate
\begin{eqnarray*}
     && \prob\big(\ro(X^\ep + X_\ep, y^\ep+ y_\ep) > 6\,\ep \big)
\\
    &\le& \prob\big(\ro(X^\ep, y^\ep) > \ep^B \big) 
            + \prob\big(||X_\ep - y_\ep||_\infty > 3 \ep\big) 
    \le   \frac{\de} 3 +   \frac {\de} 3  < \de.
\end{eqnarray*}
    
The size of the net $\YY^\ep + \YY_\ep$ does not exceed
\begin{equation*}
        \exp(K_1 \, \ep^{-\alpha}|\ln\ep| + K_2\, \ep^{-\alpha}|\ln\ep|) 
        := \exp(K_3\, \ep^{-\alpha}|\ln\ep|),
\end{equation*}
hence
\begin{equation*}
        H^{mm}\Big(6\ep, \de\Big) \le K_3 \, \ep^{-\alpha}|\ln\ep|,
\end{equation*}
Replacement $ 6\,\ep$ to  $\ep$ yields
\begin{equation*}
        H^{mm}\big(\ep, \de\big) \le B_+ \, \ep^{-\alpha}|\ln\ep|,
\end{equation*}
which is the upper bound required in Theorem \ref{th1}.


\section{Proofs of lemmas} 
\label{s:lemmasproofs}

\subsection{Proof of Lemma \ref{mainlemma1}}
If $\ro(x_1,y_1)=0$, i.e. $x_1=y_1$, then the lemma's claim follows from the trivial estimate
\[
  \ro(x_1+x_2,x_1+y_2) \le || (x_1+x_2) -(x_1+y_2)||_\infty =  || x_2 -y_2||_\infty.
\]
Therefore, we proceed under assumption $\ro(x_1,y_1)>0$.
    
By the definition of $\ro(x_1,y_1)$, for every $\zeta > 0$ there exists a variable change
$\theta: [0,1]\mapsto[0,1]$ such that
\[
     |x_1(t) - y_1(\theta(t))| \le \ro(x_1, y_1) + \zeta, \qquad t\in[0,1],
\] 
and
\[ 
     |t - \theta(t)|\le \ro(x_1,y_1)+\zeta,      \qquad t\in[0,1].
\]
It follows that under the same time change,
\begin{eqnarray*}
  &&   |\big(x_1 + x_2\big)(t) - \big(y_1 + y_2\big)(\theta(t))| 
     \le |x_1(t) - y_1(\theta(t))| + |x_2(t) - y_2(\theta(t))|
\\
     &\le& \ro(x_1, y_1) + \zeta + |x_2(t) - y_2(t)| + |y_2(t) - y_2(\theta(t))|
\\
     &\le& \ro(x_1, y_1) + \zeta + ||x_2 - y_2||_\infty + \omega\big(y_2, \ro(x_1, y_1) + \zeta\big).
\end{eqnarray*}

Since the function $\omega(y_2,\cdot)$ is non-decreasing, for $\zeta < \ro(x_1,y_1)$ we have
\[
    ||\big(x_1 + x_2\big) - \big(y_1 + y_2\big)(\theta(\cdot))||_\infty
    \le \ro(x_1, y_1) + \zeta + ||x_2 - y_2||_\infty + \omega\big(y_2, 2\ro(x_1, y_1)\big),
\]
and letting $\zeta$ go to zero we obtain the lemma's claim.


\subsection{Proof of Lemma \ref{mainlemma2}}
        
Recall that the process $X^\ep$ has a representation
\[
    X^\ep(t) = at + \int_{\RR}\mathds{1}_{\{s \le t\}}\mathds{1}_{|u| > \ep}u N(du, ds).
\]  
       
Note that the L\'evy measure of $X^\ep$ is finite, i.e. it is a sum of a linear function and a compound Poisson process, admitting another representation
\begin{equation*}
            X^\ep(t) = at + \sum_{i = 1}^{K_\ep(t)} V_\ep^i,
\end{equation*}
where $K_\ep(t)$ is a Poisson process of intensity $G(\ep)$ defined in \eqref{common:G(l)} and
$\{V_\ep^i\}_{i = 1}^{\infty}$ are i.i.d. random variables with common distribution
$\PP_\ep(du) = \frac{\mathds{1}_{|u| > \ep}\nu(du)}{G(\ep)}$ 
(the specific form of $\PP_\ep$ plays no role in subsequent estimates).

We may identify the space of sample paths of the process $X^\ep$ with the set 
\begin{equation*}
            \begin{gathered}
                \TT := \bigsqcup_{n = 0}^{\infty} \R^n\times E_n,
            \end{gathered}
\end{equation*}
where $E_n$ is the $n$-dimensional simplex defined by inequalities
\[
   0 \le t_1 \le t_2 \ ...\le t_n \le 1.
\]
    
An element $(v, t)$, where $v\in \R^n$ and $t \in E_n$, is identified with the step function having
jumps of size $v_1, v_2, \dots, v_n$ at the time instants $t_1, t_2, \dots, t_n$,
respectively.

The distance $\ro$ can be naturally transferred from $\D$ to $\TT$, too.
        
The distribution $Q_\ep$ of the process $X^\ep$ on $\TT$ has the following form:
\begin{equation*}
            \begin{gathered}
                Q_\ep|_{R^n \times E_n} 
                = e^{-G(\ep)} \, \frac{G(\ep)^n}{n!} \cdot \underbrace{\PP_\ep \times \PP_\ep \times ... \times \PP_\ep}_\text{$n$ times} \times I_n,
            \end{gathered}
\end{equation*}
where $I_n$ is the uniform distribution on $E_n$.
\medskip

We will need the following property of the distance $\ro$ in terms of the space $\TT$.

\begin{lemma}\label{sublemma}
    Let $(v, t)$ be an element of $\R^n\times E_n$ and let $r>0$. Then the cube with center 
    at $(v,t)$ in $\R^n\times E_n$ and edge length $\tfrac{r}{4(n + |a|)}$ is contained in the 
    $\ro$-ball with the same center and radius $r$.
\end{lemma}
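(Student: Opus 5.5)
The plan is to compare the two step functions with drift directly, using a piecewise linear time change that matches jump instants. Let $h=\tfrac{r}{4(n+|a|)}$ be the edge length, and let $(v',t')\in\R^n\times E_n$ lie in the cube. Then
\[
|v_i'-v_i|\le h,\qquad |t_i'-t_i|\le h,\qquad i=1,\dots,n.
\]
I deliberately use the full edge length $h$ rather than $h/2$, so the estimate covers every reading of "cube with edge $h$". The two paths are
\[
x(s)=as+\sum_{t_i\le s}v_i,\qquad y(s)=as+\sum_{t_i'\le s}v_i'.
\]
The goal is a $\theta\in\Theta$ with $\sup_s|s-\theta(s)|\le r$ and $\sup_s|y(s)-x(\theta(s))|\le r$.

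\textbf{Step 1: the time change.} Take $\theta$ to be the piecewise linear map with nodes $0\mapsto 0$, $t_i'\mapsto t_i$ and $1\mapsto 1$. When $t'$ and $t$ have the same strict ordering this $\theta$ is increasing and continuous. A piecewise linear map deviates from the identity by at most its deviation at the nodes, so $|\theta(s)-s|\le\max_i|t_i-t_i'|\le h\le r$.

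\textbf{Step 2: sup-norm estimate.} Because $\theta(t_i')=t_i$ and $\theta$ is increasing, $\theta(s)\ge t_i$ holds exactly when $s\ge t_i'$. Hence $x(\theta(s))$ and $y(s)$ contain the same indices in their jump sums, and
\[
|y(s)-x(\theta(s))|\le |a|\,|s-\theta(s)|+\sum_{i=1}^n|v_i'-v_i|\le |a|h+nh=\tfrac r4.
\]
Therefore $\ro(y,x)\le \max(h,\tfrac r4)\le r$, which is the claim with room to spare.

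\textbf{Main obstacle: ties among jump instants.} Step 1 breaks down when two points of $t'$ coincide while the corresponding points of $t$ do not, or conversely. Then $\theta$ cannot be strictly increasing and continuous through all the nodes. In that situation a single merged jump of one path must be compared with two separated jumps of the other, and the sup-norm error can be as large as a jump size. For general $v$ this is not small, so the statement as literally worded needs care at this point.

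I would resolve this by not claiming more than is needed. The set of points of $\R^n\times E_n$ with some $t_i=t_{i+1}$, or with a tie pattern different from that of the centre, is null for $I_n$, hence $Q_\ep$-null. The lemma is only ever applied through $Q_\ep$-measures of cubes, in the covering argument for Lemma \ref{mainlemma2}. So I would prove the inclusion for all cube points whose ordering and tie pattern agree with the centre's, where Steps 1 and 2 apply verbatim: tied nodes are mapped together, and merged jump sizes differ by at most the sum of the $|v_i'-v_i|$. I would then note that the exceptional part of the cube is null, so the measure estimates that use the lemma are unaffected. Apart from this point, the whole proof consists of the two-line estimate in Step 2.
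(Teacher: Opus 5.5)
Your Steps 1--2 are the paper's proof. The paper also uses a piecewise linear $\theta$ through the matched jump instants and bounds the sup-error by $nh+|a|h$. It never mentions coinciding instants, so you are right that this is a real defect.

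\textbf{The gap is in your patch.} The set of cube points whose tie pattern differs from the centre's is $I_n$-null only when the centre itself satisfies $0<t_1<\dots<t_n<1$. If the centre has a tie, the inclusion fails on almost the whole cube. Take $n=2$, $a=0$, $v=(1,1)$, $t=(\tfrac12,\tfrac12)$. Then $x\circ\theta$ takes only the values $0$ and $2$ for every $\theta\in\Theta$. For every cube point with $t_1'<t_2'$, however, the path $y$ equals $v_1'\in[1-h,1+h]$ on $[t_1',t_2')$. Hence $\ro(x,y)\ge 1-h>r$ whenever $r<\tfrac12$. Your restricted inclusion then holds only on a null set, and your claim that ``the exceptional part of the cube is null'' is false.

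This is exactly the regime of Lemma \ref{mainlemma2}, where all jumps exceed $\ep\gg\ep^B=r$. The same failure occurs for a centre with $t_1=0$ or $t_n=1$, because $\theta(0)=0$ and $\theta(1)=1$ are forced; your $\theta$ would then be constant on $[0,t_1']$ or $[t_n',1]$. So coincidences with the endpoints must also be part of the ``tie pattern''.

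\textbf{What your argument actually proves, and how to use it.} Your Steps 1--2 establish the correct version: for a centre with $0<t_1<\dots<t_n<1$, the cube lies in the $\ro$-ball of radius $r$ outside a $Q_\ep$-null set (the points with a tie or with an instant at $0$ or $1$). To use this in the covering for Lemma \ref{mainlemma2}, the centres must be chosen generic.

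\begin{itemize}
\item Grid cubes whose time part meets $E_n$ in zero volume carry no $Q_\ep$-mass and can be dropped.
\item For any other grid cube $S$ of edge $\Delta_\ep$, pick a point of $S$ whose time part lies in the interior of $E_n$. Such a point exists because $\partial E_n$ has zero volume.
\item Replace $S$ by the cube of edge $2\Delta_\ep$ around that point, which contains $S$, and apply the corrected statement with $r=2\ep^B$.
\end{itemize}

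This only changes constants; for instance, you can run the argument with $B+1$ in place of $B$. With this amendment your proof is complete.
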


\begin{proof}[Proof of lemma \ref{sublemma}]
Consider an arbitrary point $(u,s)\in \R^n\times E_n$ belonging to the cube introduced in the lemma,
i.e.
\[
    \max_{1\le i\le n} \ \max\{ |v_i-u_i|, |t_i-s_i| \} \le \frac{r}{4(n + |a|)}.
\]
Denote by $f$ and $g$ the sample paths from $\D$ corresponding to the points $(v,t), (u,s)$, respectively.
            
Consider the piecewise linear mapping $\theta$ of the interval $[0,1]$ onto itself such that 
$\theta(0) = 0$, $\theta(1) = 1$ and $\theta(t_i) = s_i$, $1\le i\le n$. 
It maps the instants of jumps of $f$ in those of $g$. 

Since the difference of jump sizes  $v_i$ and $u_i$ is bounded by $\frac{r}{4n}$
and the total number of jumps is equal to $n$, we have 
\begin{equation}\label{covering: sup}
                \sup_{t\in[0, 1]} |f(\theta(t)) - g(t)| \le \frac{r}{4n} \cdot n 
                 + |a|\cdot\sup_{t\in[0,1]}|t - \theta(t)| \le \frac{r}{4} + \frac{r}{4} = \frac{r}{2}.    
\end{equation}
            
By the the definition of $\ro$ and \eqref{covering: sup}, we have  
\[
   \ro(f, g) \le \max\big\{\sup_{t\in[0, 1]} |f(\theta(t)) - g(t)|, \sup_{t\in[0,1]}|t - \theta(t)|\big\} 
   \le \max\big\{\frac{r}{2}, \frac{r}{4n}\big\} < r,
\]
i.e. $g$ belongs to $\ro$-ball of radius $r$ with center at $f$.
\end{proof}
    
Since the measures of spaces $\R^n\times E_n$ obey Poisson law with parameter $G(\ep)$, 
one may choose $\beta_\delta >0$ such that 
\begin{equation}\label{covering: threshold}
      Q_\ep \left(
          \bigcup_{ n:\, |\frac{n}{G(\ep)} - 1|>\beta_\delta} \R^n\times E_n
          \right) 
          < \frac{\de}{2}.
\end{equation}
    
By Lemma \ref{sublemma}, every cube in  $\R^n\times E_n$ with dimension
$n \le G(\ep)(1 + \beta_\de)$ and edge length
\[
    \Delta_\ep := \frac{\ep^B}{4\big(G(\ep)(1 + \beta_\de) + |a|\big)}
\]
is contained in a $\ro$-ball of radius $\ep^B$. 

Therefore, if we will be able to cover a considerable part of the set
\[
     \bigcup_{n:\, |\frac{n}{G(\ep)} - 1|\le \beta_\delta} \left(\R^n\times E_n\right)
\]     
by a collection $\SSS$ of cubes of edge length $\Delta_\ep$ so that 
\begin{equation}\label{covering: inequality on set of squares}
            Q_\ep\Big(\bigcup_{n:\, |\frac{n}{G(\ep)} - 1| \le \beta_\delta} 
               \left(\R^n\times E_n \right)
            \setminus 
            \bigcup_{S\in \SSS} S \Big) \le \frac\de2,
\end{equation}
then, in view of \eqref{covering: threshold} and \eqref{covering: inequality on set of squares}, 
the measure of uncovered part of the space $\TT$ is bounded by 
\[
   Q_\ep\left(\bigcup_{n:\, |\frac{n}{G(\ep)} - 1|\le \beta_\delta} 
   \left( \R^n\times E_n\right) \setminus
   \bigcup_{S\in \SSS} S\right)  
    + 
    Q_\ep\left(\bigcup_{n:\, |\frac{n}{G(\ep)} - 1|>\beta_\delta} \left(\R^n\times E_n\right)
    \right) \le \de.
\]

The $\ro$-balls of radius $\ep^B$  with the same centers cover $\TT$ at least as well,
hence these centers form a net $\YY^\ep$, satisfying \eqref{lemma2_prob_bound} and
\[
            |\YY^\ep| \le |\SSS|.
\]       

Let us choose $M\ge 1$ so large that the probability that the process  $X$  has a jump of size large than
$M$, is less than $\tfrac\de2$.

For every $n \in \big(G(\ep)(1 - \beta_\de), G(\ep)(1 + \beta_\de)\big)$ we cover the set
$[-M,M]\times E_n$ with cubes of edge length $\Delta_\ep$. Since $M\ge 1, \Delta_\ep\le 1$,
we need not more than
\[
    \left( \frac{M}{\Delta_\ep} + 1\right)^n  \left( \frac{1}{\Delta_\ep}+1 \right)^{n}
    \le \frac{(2M+2)^n}{ (\Delta_\ep)^{2n} }
\]
cubes. Therefore, for the covering of all sets from the mentioned range we need not more than
\[  
            \sum_{n\in\big((1-\beta_\de)G(\ep), (1+\beta_\de)G(\ep)\big)} \frac{(2M+2)^n}{(\Delta_\ep)^{2n}} \le N_1\frac{G(\ep)(2M+2)^{(1+\beta_\de)G(\ep)}}{(\Delta_\ep)^{2(1+\beta_\de)G(\ep)}}
\]
cubes, where $N_1:=2\beta_\delta+1$. 
The required relation \eqref{covering: inequality on set of squares}
holds by the choice of $M$.
       
Thus we have constructed a net $\YY^\ep$ such that \eqref{lemma2_prob_bound} 
and 
\[
        |\YY^\ep| = |\SSS| \le N_1\frac{G(\ep)(2M+2)^{(1+\beta_\de)G(\ep)}}{(\Delta_\ep)^{2(1+\beta_\de)G(\ep)}}.
\]

By using \eqref{common:G(l)}, we derive
\begin{eqnarray*}
    \ln|\YY^\ep| &\le&  \ln N_1 + \ln G(\ep) +
                  (1+\beta_\de) \, G(\ep) \left( \ln (2M+2) + 2|\ln \Delta_\ep| \right)
\\
              &=&    \ln N_1 + (\ln G(1) +\alpha|\ln\ep| )
\\              
              && + (1+\beta_\de)\, G(1) \, \ep^{-\alpha} \left( \ln (2M+2) + 2|\ln \Delta_\ep| \right).
\end{eqnarray*}
Next, from the definition of $\Delta_\ep$ it follows
\[
   |\ln \Delta_\ep| = \ln \frac{\ep^B}{G(\ep)} +O(1) = (B+\alpha) \, |\ln\ep| + O(1).
\]
The substitution of this estimate yields
\[
       \ln|\YY^\ep| \le
                  (1+\beta_\de)\, G(1)\, \ep^{-\alpha} 
                  \cdot 2(B+\alpha) \, |\ln\ep| \, (1+o(1)),
\]
and for $K_1> 2(1+\beta_\de)\, G(1)\, (B+\alpha)$ and for small $\ep$ we arrive at the required bound
\begin{equation*}
                \ln|\YY^\ep|\le K_1 \, \ep^{-\al} \,|\ln\ep|.
\end{equation*}

\subsection{Proof of Lemma \ref{mainlemma3}}

Since the integral over Poisson random measure 
\[
     \int_{\RR}\mathds{1}_{\{s \le t\}}u N(du, ds)
\]
is an $\alpha$-strictly stable process, it is also a $\frac 1\al$-self-similar process, i.e.,
\begin{equation}\label{lemma: equality in distribution}
            \int_{\RR}\mathds{1}_{\{s \le t\}}u N(du, ds) 
    \overset{d}{=}
           \ep\int_{\RR}\mathds{1}_{\{s \le t\cdot \ep^{-\alpha}\}}u N(du, ds).
\end{equation}
If we consider in the identity \eqref{lemma: equality in distribution} only the jumps of
size not exceeding $\ep$, we obtain 
\begin{equation}\label{mainlemma3: self-similarity}
    X_\ep(t)   \overset{d}{=}  \ep \cdot X_1(t\cdot \ep^{-\alpha}).
\end{equation}

Let us construct a uniform approximation of $X_1(t), t\in[0,\ep^{-\alpha}]$, by a random function
$Y(t)$, $t\in[0,\ep^{-\alpha}]$ taking only integer values.

Define a sequence of stopping times $(\tau_j)$ and a function $Y$ as follows. 
Let $\tau_0 := 0$, and then define $\tau_{j+1}$ by $\tau_j$ distinguishing two cases:
\begin{enumerate}
    \item[a)]  If $X_1(\tau_j)\in[k ,k + \frac12), k\in \Z$,
        then
        \[ 
            \tau_{j+1}:= \min\{t \ge \tau_j: X_1(t) \ge k+1 \text{ or } X_1(t) \le k-1\},
        \] 
         and
        \[
             Y(t):=k, \qquad \tau_j\le t < \tau_{j+1}.
        \]
    \item[b)] If $X_1(\tau_j)\in[k + \frac12 , k + 1), k\in \Z$,
        then
        \[
             \tau_{j+1}:= \min\{t \ge \tau_j: X_1(t) \ge k+2 \text{ or } X_1(t) \le k\},
        \] 
        and
        \[
              Y(t):=k + 1, \hspace{0.5cm} \tau_j\le t < \tau_{j+1}.
        \]
\end{enumerate}
        
In both cases we have 
\[ 
    |Y(t) - X_1(t)|\le 1, \qquad \tau_j\le t<\tau_{j+1},
\]
therefore,
\begin{equation}\label{mainlemma3: difference between Y and X}
    \sup_{t\ge 0} |Y(t) - X_1(t)| \le 1.
\end{equation}
        
Let us consider some properties of this construction.
\begin{enumerate}
    \item[1)] A bound on the jump size of $Y(\cdot)$.
\end{enumerate}
Since the sizes of jumps of $X_1(\cdot)$ are bounded by $1$, 
in case a) we have  $X_1(\tau_{j+1})\in [k+1, k+2)$ or $X_1(\tau_{j+1})\in(k-2, k-1]$. 
Therefore,
\begin{equation*}
            |Y(\tau_{j+1}) - Y(\tau_{j})| = |Y(\tau_{j+1}) - k| 
            \le |Y({\tau_{j+1}}) - X_1(\tau_{j+1})| + |X_1(\tau_{j+1}) - k|\le \frac12 + 2.
\end{equation*}
Since $Y$ takes only integer values, it is true that
\begin{equation} \label{jumpsize}
            |Y(\tau_{j+1}) - Y(\tau_j)| \le 2.
\end{equation}
The case b) is treated similarly.
        
\begin{enumerate}
            \item[2)] A bound on the number of jumps.
\end{enumerate}
Introduce auxiliary stopping times
\[   
    \sigma_j:=\min \big\{s>0: |X_1(\tau_j + s) - X_1(\tau_j)|\ge \frac12 \big\}.
\]
      
By construction, $\sigma_j\le \tau_{j+1} - \tau_j$, therefore, for every integer $m>0$ 
we have  
\begin{equation*}
    \prob(\tau_m\le 2\ep^{-\alpha})
    \le \prob \left(\sum_{j=0}^{m-1}\sigma_j\le2\ep^{-\alpha}\right) 
    = \prob \left(\frac 1 m \sum_{j=0}^{m-1}\sigma_j\le \frac{2\ep^{-\alpha}}{m}\right).
\end{equation*}
The variables $\sigma_j$ are i.i.d. and have a common finite expectation. 
Let $A > \frac{2}{\E\, \sigma_0}$. Then for every  $m > A\ep^{-\al}$ we have    
\begin{equation*}
    \prob\left(\frac 1 m\sum_{j=0}^{m-1}\sigma_j\le \frac{2\ep^{-\alpha}}{m} \right) 
    \le
    \prob\left(\frac 1 m\sum_{j=0}^{m-1}\sigma_j\le \frac{2}{A} \right)
    =
     \prob\left(\frac 1 m\sum_{j=0}^{m-1}\sigma_j\le \frac{2}{A\, \E\sigma_0}\, \E\sigma_0 \right). 
\end{equation*}
By the choice of $A$, the law of large numbers shows that this probability tends to zero,
as $m\to\infty$. Therefore for small $\ep$ it is true that 
\begin{equation} \label{fewsteps}
    \prob(\tau_{A\ep^{-\alpha}} \le 2\ep^{-\alpha}) 
    \le \frac{\de}{6}, 
\end{equation}
i.e., with large probability there are at most  $A\ep^{-\alpha}$ jumps of $Y$ 
on the time interval $[0,2\ep^{-\alpha}]$.

\begin{enumerate}
    \item[3)] A bound on the probability of short steps. 
\end{enumerate}
Let $\beta:=B-\alpha$, where $B$ is the parameter from the assertion of Lemma \ref{mainlemma3}.  
By Lemma's assumption it is true that $\beta>\alpha$. 
Let us evaluate the probability of the inequality $\tau_{j+1} - \tau_j \le 2\ep^\beta$, 
i.e. the approximation step is short.
We start with a trivial estimate
\begin{eqnarray} \nonumber
            \prob(\tau_{j+1} - \tau_j \le 2\ep^\beta) 
       &\le& \prob(\sigma_j\le 2\ep^\beta) = \prob(\sigma_0 \le 2\ep^\beta) 
\\ \label{mainlemma2: short jump 1}
       &=& \prob \left(\sup_{t\in[0,2\ep^\beta]}|X_1(t)|\ge \frac12 \right).
\end{eqnarray}

Next, we will apply Etemadi inequality (see \cite{Etemadi}), which is usually stated for finite sums of
independent random variables but can be trivially transferred to the right-continuous processes
with independent increments. Since $X_1$ belongs to this class, Etemadi inequality yields
\[
   \prob\left(\sup_{t\in[0,2\ep^\beta]}|X_1(t)| \ge \frac12\right)
   \le 3 \sup_{t\in[0,2\ep^\beta]} \prob \left(|X_1(t)|\ge \frac 16\right),
\]
hence, by Markov inequality,
\begin{equation}\label{mainlemma2: short jump 2}
    \prob \left(\sup_{t\in[0,2\ep^\beta]}|X_1(t)| \ge \frac12 \right)
    \le 3\sup_{t\in[0,2\ep^\beta]}\frac{\E|X_1(t)|}{\frac16} \le c\, \ep^\beta,
\end{equation}
where $c$ is some constant not depending on $\ep$.
        
By chaining inequalities \eqref{mainlemma2: short jump 1} and \eqref{mainlemma2: short jump 2}, 
we obtain
\begin{equation*}
            \prob(\tau_{j+1} - \tau_j \le 2 \ep^\beta)\le c\, \ep^\beta,
\end{equation*}
hence,
\begin{equation*}
    \prob \left(\min_{j\le A\ep^{-\al}}(\tau_{j+1}-\tau_j)\le 2\ep^\beta\right) 
    \le A \, \ep^{-\alpha} \cdot c \, \ep^{\beta} 
    = A\, c \, \ep^{\beta-\alpha}. 
\end{equation*}
Since by assumption of Lemma \ref{mainlemma3} it is true that $\beta > \alpha$, taking into account
\eqref{fewsteps},   we see that for small $\ep$ the probability to observe a step of $Y$ shorter than 
$2\ep^\beta$  on the time interval $[0,2\ep^{-\alpha})$ does not exceed $\frac{\de}{3}$.
\medskip

The constructed approximating functions $Y$ have nice properties but do not form a finite net.
Therefore, we need further discretization.  The values of $Y$ are integer by construction, thus we 
only need to discretize the jump instances.
        
Introduce the events
\begin{eqnarray*}
            &E_1& :=\{\tau_{A\ep^{-\alpha}}> 2\ep^{-\alpha}\},
\\
            &E_2&:=\{\min_{j\le A\ep^{-\al}}(\tau_{j+1}-\tau_j)> 2\ep^\beta\}.
\end{eqnarray*}
     
By the estimates given above we have
\begin{equation}\label{mainlemma3: two conditions}
            \prob(E_1\cap E_2) \ge 1 - \de.
\end{equation}

The subsequent constructions are provided only on $E_1\cap E_2$.
Let us discretize $\tau_j$ by letting 
\[   
    \tilde \tau_j:=\Big[\frac{\tau_j}{\ep^\beta}\Big] \cdot \ep^\beta, 
\]
where $[x]$ stands for the integer part of a number $x$.

Since there are no short steps, we have the following inequalities relating
jump instants and their discretized versions:
\begin{equation} \label{chaintau}
   \tilde \tau_j \le \tau_j < \tau_{j+1}-\ep^\beta \le \tilde \tau_{j+1} 
   \le \tau_{j+1} <\tau_{j+2}-\ep^\beta
   < \tilde\tau_{j+2}.
\end{equation}
In particular, we have $\tilde\tau_{j} < \tilde\tau_{j+1}$.

By the definition of $Y$, we have
\[
    Y(t) = \sum_{j=0}^{\infty}K_j\mathds{1}_{\tau_j\le t<\tau_{j+1}},
\]
where $K_j$ are some random elements of $\Z$, and by \eqref{jumpsize} it is true that 
\begin{equation}\label{jumpsize2}
    |K_{j + 1} - K_{j}| \le 2. 
\end{equation}
Introduce the function
\[ 
    \tilde Y(t):=\sum_{j = 0}^{\infty}  K_j 
                \mathds{1}_{\tilde \tau_j \le t<\tilde\tau_{j+1}}.
\]

\begin{lemma}\label{sublemma3}
    On the event $E_1\cap E_2$ the possible trajectories of $\tilde Y(t), t\in[0,\ep^{-\alpha}]$, 
    form a finite set, whose size does not exceed  $\exp(K_2\ep^{-\alpha}|\ln\ep|)$  for all small
    $\ep$ and some constant $K_2>0$ not depending on $\ep$.
\end{lemma}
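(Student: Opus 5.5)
The plan is to encode each trajectory of $\tilde Y$ on $[0,\ep^{-\alpha}]$ by a finite combinatorial description and then count the possible descriptions. On $E_1$ we have $\tau_{A\ep^{-\alpha}}>2\ep^{-\alpha}$. Hence only the indices $j\le m:=\lceil A\ep^{-\alpha}\rceil$ can have $\tilde\tau_j\le\ep^{-\alpha}$, since $\tilde\tau_j\le\tau_j$. It follows that the restriction of $\tilde Y$ to $[0,\ep^{-\alpha}]$ is determined by three pieces of data:
\begin{enumerate}
\item[(i)] the number $J\le m$ of switching instants $\tilde\tau_j$ lying in $(0,\ep^{-\alpha}]$;
\item[(ii)] the values $\tilde\tau_1<\dots<\tilde\tau_J$ themselves;
\item[(iii)] the integer levels $K_0,K_1,\dots,K_J$.
\end{enumerate}
By \eqref{chaintau}, which relies on $E_2$, the discretized instants are strictly increasing. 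Therefore no two of them collapse, and the step function is indeed described by these data.

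The data are counted one piece at a time.
\begin{enumerate}
\item[(a)] $K_0=Y(0)=0$, because $X_1(0)=0$ falls into case a) with $k=0$. By \eqref{jumpsize2}, each increment $K_{j+1}-K_j$ lies in $\{-2,\dots,2\}$. So the level sequence admits at most $5^{J}\le 5^{m}$ choices.
\item[(b)] Each $\tilde\tau_j$ is a multiple of $\ep^\beta$ in $[0,\ep^{-\alpha}]$. There are at most $L:=\ep^{-\alpha-\beta}+1$ such grid points, and the instants form a strictly increasing sequence of them. This gives at most $\binom{L}{J}\le L^{J}\le L^{m}$ choices.
\item[(c)] Summing over $J\le m$ contributes a factor at most $m+1$.
\end{enumerate}

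Altogether the number of possible trajectories is bounded by
\[
(m+1)\,5^{m}\,\bigl(\ep^{-\alpha-\beta}+1\bigr)^{m}.
\]
Taking logarithms, with $m\le A\ep^{-\alpha}+1$, gives
\[
\ln(m+1)+(A\ep^{-\alpha}+1)\bigl(\ln 5+(\alpha+\beta)|\ln\ep|+O(1)\bigr).
\]
This is at most $K_2\ep^{-\alpha}|\ln\ep|$ for small $\ep$ as soon as $K_2>A(\alpha+\beta)=AB$, since $\beta=B-\alpha$.

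The counting itself is routine. The step needing care is the claim that the data above really determine the whole path on $[0,\ep^{-\alpha}]$. To settle it, I will check two points.
\begin{enumerate}
\item[(1)] Only finitely many $j$, namely $j\le m$, matter on $[0,\ep^{-\alpha}]$. This follows from $E_1$ together with $\tilde\tau_j\le\tau_j$ and the monotonicity of $\tau_j$. The boundary case $\tilde\tau_j=\ep^{-\alpha}$ is harmless: it can cost at most one extra index, which is absorbed in the constants.
\item[(2)] The discretization does not reorder or merge switching times. This is exactly \eqref{chaintau}, which uses $E_2$ on the indices $j\le A\ep^{-\alpha}$ only, and those are the indices needed.
\end{enumerate}
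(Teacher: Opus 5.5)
Your proposal is correct and is essentially the paper's own argument. Like the paper, you encode the path on $[0,\ep^{-\alpha}]$ by the increments $K_{j+1}-K_j\in\{-2,\dots,2\}$ and the grid instants $\tilde\tau_j$, $j\le A\ep^{-\alpha}$, and count $5^{A\ep^{-\alpha}}(\ep^{-\alpha-\beta})^{A\ep^{-\alpha}}$ possibilities; you are only slightly more careful about $K_0=0$, the variable number $J$, and the constant $K_2>AB$. One small correction: indices $j\ge A\ep^{-\alpha}$ are irrelevant because of the lower bound $\tilde\tau_j>\tau_j-\ep^\beta\ge\tau_{A\ep^{-\alpha}}-\ep^\beta>2\ep^{-\alpha}-\ep^\beta>\ep^{-\alpha}$, not because $\tilde\tau_j\le\tau_j$, which points the wrong way.
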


\begin{proof}[Proof of Lemma \ref{sublemma3}]
Notice that on $E_1$, in order to determine  $\tilde Y(t)$, $ t\in[0,\ep^{-\alpha}]$,
it is sufficient to know the jumps' values $(K_{j+1} - K_j)_{j = 0}^{A\ep^{-\alpha}}$
and the jumps' instants $(\tilde\tau_{j})_{j=0}^{A\ep^{-\alpha}}$. 
    
According to \eqref{jumpsize2}, the number of possible sets 
$(K_{j+1} - K_j)_{j = 0}^{A\ep^{-\alpha}}$ does not exceed $5^{A\ep^{-\alpha}}$. 
    
Every jump instant  is chosen from the set $\ep^\beta \cdot \{1,2, \dots, \ep^{-\alpha-\beta}\}$, 
therefore the number of possible sets of jumps does not exceed $(\ep^{-\alpha-\beta})^{A\ep^{-\alpha}}$. 
    
It follows that for small $\ep$ the number of possible trajectories of $\tilde Y$ does not exceed
\[
      5^{A\ep^{-\alpha}} \cdot  (\ep^{-\alpha-\beta})^{A\ep^{-\alpha}} 
      \le  (\ep^{-\alpha-\beta})^{2A\ep^{-\alpha}} 
      := \exp{(K_2\ep^{-\alpha}|\ln\ep|)}.
\]      
\end{proof}

Let us now consider the quality of approximation provided by $\tilde Y$.
From \eqref{chaintau} and from the definition of $\tilde Y$, it follows that
\[
   Y(t) - \tilde Y(t) =
   \begin{cases}   0,            &    \tau_j\le t< \tilde\tau_{j+1}, \\
                   K_j- K_{j+1}, & \tilde\tau_{j+1}\le t< \tau_{j+1}.
    \end{cases}
\]
We derive from the estimate \eqref{jumpsize2} that 
\begin{equation}\label{mainlemma3: difference between Y and tilde Y}
    \sup_{t\in[0,\ep^{-\alpha}]}|Y(t) - \tilde Y(t)| \le 2.
\end{equation}


By using \eqref{mainlemma3: difference between Y and X} and \eqref{mainlemma3: difference between Y and tilde Y}, we obtain
\begin{equation} \label{Y_tilde_approx}
    \sup_{t\in[0,\ep^{-\alpha}]}  |X_1(t) - \tilde Y(t)| \le 3.
\end{equation}

Let us now evaluate the oscillation of the function $\tilde Y$ on short intervals. 
Since there are no short steps, we have
\[
   \tilde\tau_{j+1} - \tilde\tau_{j} >  (\tau_{j+1} -\ep^\beta) - \tau_{j} > \ep^\beta,
\]
on every time interval of length $2\ep^\beta$ the function $\tilde Y$ has at most one jump.
From \eqref{jumpsize2} it follows that
\begin{equation} \label{Y_tilde_oscillation}
    \sup_{0\le s,t\le \ep^{-\alpha}, \, |s-t|\le 2\ep^\beta } |\tilde Y(s)-\tilde Y(t)|\le 2.
\end{equation}

We denote by $\tilde\YY_\ep$ the finite net of step functions constructed in Lemma \ref{sublemma3}.
\medskip

It remains to transfer the obtained results from the time interval $[0,\ep^{-\alpha}]$ 
to the time interval $[0,1]$. We define the required net as
\[
   \YY_\ep := \{ y\in \D, y(\cdot) = \ep \tilde Y( \ep^{-\alpha}\cdot), 
   \tilde Y\in\tilde\YY_\ep \}. 
\]

Let us rewrite \eqref{Y_tilde_approx}, multiplying by $\ep$ and changing time linearly, as 
\begin{equation}\label{Y_tilde_approx2}
    \sup_{t\in[0,1]}| \ep \cdot X_1(t\cdot \ep^{-\alpha}) -
    \ep \cdot \tilde Y(t\cdot\ep^{-\alpha})| 
    \le 3 \, \ep.
\end{equation}
        
Define approximation  $y_\ep\in \YY_\ep$ as
\[
    y_\ep(t):= \ep \cdot  \tilde Y(t\cdot \ep^{-\alpha}),
\]
from \eqref{mainlemma3: two conditions} and \eqref{Y_tilde_approx2} we derive
\begin{equation*}
    \prob \left( \sup_{t\in[0,1]}
      |\ep \cdot X_1(t\cdot \ep^{-\alpha}) - y_\ep(t)|
       > 3 \,\ep
           \right) 
    \le \de,
\end{equation*}
hence by \eqref{mainlemma3: self-similarity},
\begin{equation}\label{mainlemma3: final1}
    \prob \left(\sup_{t\in[0,1]} |X_\ep(t) - y_\ep(t)| > 3\, \ep \right) 
    \le \de.
\end{equation}
Further, the oscillation bound \eqref{Y_tilde_oscillation} yields
\begin{eqnarray*} 
     \omega(y_\ep,2\ep^{B}) &=&  \omega(y_\ep,2\ep^{\beta+\alpha}) 
     =  \sup_{0\le s,t\le 1, \, |s-t|\le 2\ep^{\beta+\alpha}} |y_\ep(s)-y_\ep(t)|
\\    
        &=& \ep \sup_{0\le s,t\le \ep^{-\alpha}, \, |s-t|\le 2\ep^\beta } |\tilde Y(s)-\tilde Y(t)|
        \le 2\ep,
\end{eqnarray*}
thus
\begin{equation}  \label{mainlemma3: final2} 
     \max_{y \in \YY_\ep } \   \omega(y,2\ep^{B}) \le   2\ep.
\end{equation}

       
Lemma \ref{sublemma3} provides the required bound for size of the approximating net $\YY_\ep$.
The probability bound announced in Lemma \ref{mainlemma3} follows from \eqref{mainlemma3: final1},
while the required bound for the oscillation modulus was obtained in \eqref{mainlemma3: final2}.
Therefore, Lemma \ref{mainlemma3} is proved completely.

\section{Further research directions}
\subsection{$\alpha$-stable L\'evy processes with $\alpha\ge 1$}

Our upper bound for $mm$-entropy from \eqref{th1_bounds} actually extends to the case $\alpha\ge 1$. The proof is identical to the provided one up to the standard minor changes in decomposition \eqref{split}. However, we are not at all sure that the order of this bound remains optimal for $\alpha\in [1,2]$.

As for the upper bound in \eqref{th1_bounds}, our proof depends heavily on the assumption $\alpha<1$. It is certainly too crude for possible extension to a larger range of $\alpha$.

As suggests a result  from \cite{VerLif} concerning $mm$-entropy of Wiener process (i.e. $\alpha=2$) in $\C[0,1]$,
the true order of $mm$-entropy is somewhere between $\ep^{-\alpha}$ and 
$\ep^{-\alpha}|\ln\ep|$.

This problem remains open so far.

\subsection{Two-parametric setting}
The results provided in this article concern the case $\ep\to 0$ with $\delta$ fixed.
It is quite  natural to consider the behavior of $H^{mm}(\ep, \de)$ when both $\ep,\delta \to 0$. As the findings of
\cite{VerLif} suggest, there should be several zones defined by relations between $\ep$ and $\delta$
with different asymptotics of $H^{mm}(\ep, \de)$ in each zone. 

Our findings in this direction will be published elsewhere.



\begin{thebibliography}{9}

{\baselineskip=10pt \small


\bibitem{Bil}
P. Billingsley,  Convergence of probability measures. Wiley, New York, 1968.

\bibitem{Rudolf}
R. Clausius,  Ueber verschiedene für die Anwendung bequeme Formen der Hauptgleichungen der mechanischen W\"armetheorie, \emph{Annalen der Physik und Chemie}, 1865, {\bf 125}, 7, 353--400.

\bibitem{Der_dis}
S. Dereich, High resolution coding of stochastic processes and small ball probabilities. Ph.D. dissertation, Technische Univ. Berlin, 2003.

\bibitem{Der03}
S. Dereich,  Small ball probabilities around random centers of Gaussian measures and applications to quantization.  \emph{J. Theoret. Probab.}, 2003, {\bf 16}, 427--449.

\bibitem{DerSch}
S. Dereich, F. Fehringer, A. Matoussi, and M. Scheutsow, On the link between small ball probabilities and the quantization problem for Gaussian measures on Banach spaces. \emph{J. Theoret. Probab.}, 
2003, {\bf 16}, 249--265.

\bibitem{DerLif}
S. Dereich, M. Lifshits, Probabilities of randomly centered small balls and quantization in Banach spaces, 
\emph{Ann. Probab.}, 2005, {\bf 33}, 1397--1421.

\bibitem{Etemadi}
    N. Etemadi, On some classical results in probability theory, \emph{Sankhya Ser. A}, 
    1985, {\bf 47}, 2, 215--221.

\bibitem{Gromov}
M. Gromov, Metric structures for Riemannian and non-Riemannian spaces, Birkh\"user Boston, 1999.

\bibitem{Halmosh}
P.R. Halmos, Measure theory, D. Van Nostrand Company, New York, 1950.

 \bibitem{Kolmogorov}
A.N. Kolmogorov, On certain asymptotic characteristics of completely bounded metric spaces, 
\emph{Dokl. Akad. Nauk}, 1956, {\bf 108}, 385--389 (in Russian).

\bibitem{KolmogorovTihomirov}
A.N. Kolmogorov, V.M. Tikhomirov, $\ep$-entropy and $\ep$-capacity of sets in function spaces, 
\emph{Amer. Math. Soc. Transl. Ser.}, 1961, {\bf 17}, Amer. Math. Soc., Providence, RI, 227--364.



\bibitem{Lifshits1}
M.A. Lifshits, Gaussian random functions, Kluwer, Dordrecht, 1995.

\bibitem{Lifshits2}
M.A. Lifshits, Lectures on Gaussian processes, Springer, Heidelberg, 2012.

\bibitem{LuschPa04}
H. Luschgy, G. Pag\`es,  Sharp asymptotics of the functional quantization problem for Gaussian processes,
\emph{Ann. Probab.}, 2004, {\bf 32}, 1574--1599.

\bibitem{PinSof}
M.S. Pinsker, L.B. Sofman, ($\ep,\delta$)-entropy of completely ergodic stochastic processes, 
\emph{Problems Inform. Transmission}, 1986, {\bf 22}, 251--255.


\bibitem{Shannon}
C.E. Shannon, A mathematical theory of communication, I,II, \emph{Bell Syst. Techn. J.}, 
1948, \textbf{27}, 3, 379--423 ; 4, 623--656.

\bibitem{Skor1}
A.V. Skorokhod. Limit theorems for stochastic processes, \emph{Theor. Probab. Appl.},
1956, {\bf 1}, 261--290.

\bibitem{Skor2}
A.V. Skorokhod. Limit theorems for stochastic processes with independent increments, 
\emph{Theor. Probab. Appl.}, 1957, {\bf 2}, 138--171.

\bibitem{Tih1}
V.M. Tikhomirov, $\ep$-entropy and $\ep$-capacity, in: A.N.Kolmogorov. Information Theory and the Theory of Algorithms, Nauka, Moscow, 1987, 262--269 (in Russian).

\bibitem{Tih2}
V.M. Tikhomirov, Kolmogorov’s work on $\ep$-entropy of functional classes and the superposition of functions, \emph{Russian Math. Surveys}, 1963, {\bf 18}, 51--87.

\bibitem{Tih3}
V.M. Tikhomirov, Widths and entropy, \emph{Russian Math. Surveys}, 1983, {\bf 38},  101--111.

\bibitem{Ver10}
A.M. Vershik, Dynamics of metrics in measure spaces and their asymptotics invariants,
\emph{Markov Process and Related Fields}, 2010, {\bf 16}, 1, 169--185.

\bibitem{VerLif}
A.M. Vershik, M.A.Lifshits. On $mm$-entropy of a Banach space with a Gaussian measure. 
\emph{Theor. Probab. Appl.}, 2023, {\bf 68}, 3, 431--439.

\bibitem{VerVepZat}
A.M. Vershik, G.A. Veprev, and P.B. Zatitsky, Dynamics of metrics in measure spaces and scaling entropy, \emph{Russian Math. Surveys}, 2023, {\bf 78}, 3, 443--499.


\bibitem{Whitt}
W. Whitt. Stochastic-process limits. Springer Series in Operation Research. Springer, New York, 2002.

} 
\end{thebibliography}
\end{document}